\newtheorem{thm}{Theorem}[section]
\newtheorem{lem}[thm]{Lemma}
\newtheorem{prop}[thm]{Proposition}
\newcommand{\cF}{\mathcal{F}}
\newcommand{\cC}{\mathcal{C}}
\newcommand{\A}{\mathcal{A}}
\newcommand{\CC}{\mathbb{C}}
\newcommand{\ZZ}{\mathbb{Z}}
\DeclareMathOperator{\Rep}{Rep}
\DeclareMathOperator{\Hom}{Hom}
\DeclareMathOperator{\id}{id}
\DeclareMathOperator{\tr}{tr}
\DeclareMathOperator{\Aut}{Aut}
\DeclareRobustCommand{\eg}{e.g.\@\xspace}
\DeclareRobustCommand{\cf}{cf.\@\xspace}
\DeclareRobustCommand{\ie}{i.e.\@\xspace}
\DeclareRobustCommand{\etc}{%
    \@ifnextchar{.}%
        {etc}%
        {etc.\@\xspace}%
}
\DeclareMathOperator{\Irr}{Irr}
\newcommand{\tRep}[1]{#1\text{--}\Rep}
\newcommand{\bim}[4][]{{}\prescript{\vphantom{#1}}{#2}{#3}^{#1}_{#4}}
\DeclareMathOperator{\Rank}{rk}
\newcommand{\rk}[1]{\Rank(#1)}
\newcommand{\tu}{\mathbbm{1}} %
\begin{document}
\date{\today}
\dateposted{\today}
\title[The rank of $G$-crossed braided extensions of MTCs]{The rank of $G$-crossed braided extensions of modular 
tensor categories}

\address{Department of Mathematics, Morton Hall 321, 1 Ohio University, Athens, OH 45701, USA}
\author{Marcel Bischoff}
\email{bischoff@ohio.edu}
\email{marcel@localconformal.net}
%\thanks{Supported in part by NSF Grant DMS-1362138 and NSF Grant DMS-1700192/1821162}

\thanks{Supported in part by NSF Grant DMS-1700192/1821162}

\begin{abstract}
  We give a short proof for a well-known formula for the rank of 
  a $G$-crossed braided extension of a modular tensor category.
\end{abstract}
\maketitle

\section{Introduction}
$G$-crossed braided extensions of modular tensor categories 
are an important ingredient for the process of gauging in
topological phases of matter in the framework of modular tensor categories
\cite{BaBoChWa2014}.
Let $\cC$ be a modular tensor category, which for the purpose of this note is a semisimple, $\CC$-linear abelian ribbon category with simple tensor unit, 
such that the set of isomorphism classes of simple objects $\Irr(\cC)$ is finite and the braiding is non-degenerate, see \eg \cite{BaKi2001}.
A global symmetry \cite{BaBoChWa2014} of $\cC$
is a pair $(G,\rho)$ consisting of a finite group $G$ and homomorphism $\rho\colon G\to \Aut^\mathrm{br}_\otimes(\cC)$, where $\Aut^\mathrm{br}_\otimes(\cC)$
is the group of isomorphism classes of braided autoequivalences of $\cC$.
Gauging a global symmetry $(G,\rho)$ of $\cC$ \cite{CuGaPlZh2016} is a two step process which eventually produces a new modular tensor category
and is given as follows:
\begin{enumerate}
  \item
Construct a $G$-crossed braided extension $\cF=\bigoplus_{g\in G} \cF_g$ 
of $\cC$.
\item Consider the equivariantization $\cF^G$ of $\cF$ to obtain a new modular tensor category, which therefore contains $\Rep(G)$ as a symmetric subcategory.
\end{enumerate}
A $G$-crossed braided fusion category is a fusion category $\cF$
equipped with the following structure:
\begin{itemize}
  \item $\cF$ is faithfully $G$-graded, \ie $\cF=\bigoplus \cF_g$
    and $X_g\otimes Y_h\in\cF_{gh}$ for every $g,h\in G$,
    $X_g\in\cF_g$, and $Y_h\in\cF_h$.
  \item There is an $G$-action $X\mapsto \prescript g{}{\!X}=\rho(g)(X)$ 
    given by a monoidal functor $\underline{\rho}\colon \underline{G}\to 
    \underline{\Aut_\otimes} (\cF)$, where $\underline{\Aut_\otimes} (\cF)$
  is the categorical group of monoidal autoequivalences.
  \item There is a natural family of isomorphisms
    \begin{align}
      c_{X,Y} &\colon X\otimes Y\to \prescript g{}Y\otimes X\,, 
      &g\in G, X\in\cF_g, Y\in \cF
    \end{align}
    with $\rho(g)(\cF_h)\subset \cF_{ghg^{-1}}$ 
    fulfilling certain coeherence diagrams, see \eg \cite{Tu2010}.
\end{itemize}
We note that $\cF_e$ is a braided fusion category and 
we get  a global symmetry $\rho\colon G\to \Aut^\mathrm{br}_\otimes(\cF_e)$ given by  restriction and  truncation of $\underline{\rho}$. 
We are only interested in the case where $\cF_e$ is a modular tensor category.
We refer to \cite{DrGeNiOs2010,Tu2010} for more details.

We call a $G$-crossed braided fusion category $\cF=\bigoplus_{g\in G}\cF_g$ a
$G$-crossed braided extension of a modular tensor category $\cC$ if $\cF_e$ is 
braided equivalent to $\cC$.
The above data of $\cF$ gives by restriction a global symmetry $(G,\rho)$ on 
$\cC\cong\cF_e$.
We note that given a modular tensor category $\cC$ 
and a global action $(G,\rho)$ as an input one has to 
check that certain obstructions have to vanish 
in order for a $G$-crossed braided
to exist, see \cite{EtNiOs2010}.

$G$-crossed braided extensions arise 
naturally in rational conformal field theory. 
Let $\A$ be a (completely) rational conformal net, then 
the category of representations $\Rep(\A)$ is a unitary modular tensor category
\cite{KaLoMg2001}.
Let $G\leq \Aut(\A)$ be a finite group of automorphisms of the net $\A$.
There is a category $\tRep G(\A)$ of $G$-twisted representations of $\A$,
which is a $G$-crossed braided extension of the unitary modular tensor category $\Rep(\A)$.
The fixed point or ``orbifold net'' $\A^G$ is again completely rational and 
$\Rep(\A^G)$ is a gauging of $\Rep(\A)$ by $G$, \ie $\Rep(\A^G)$ is braided equivalent to the $G$-equivariantization 
$(\tRep G(\A))^G$, see \cite{Mg2005} for the original reference and \cite{Bi2018} for a review.

Starting with a modular tensor category $\cC$ 
we want to compute certain invariants of possible 
$G$-crossed braided extensions  $\cF=\bigoplus_{g\in G} \cF_g$ with 
$\cF_e\cong \cC$.
The simplest invariant of $\cF$ is its rank $\rk{\cF}:=|\Irr(\cF)|$.

We note that the global symmetry $(G,\rho)$ associated with 
a $G$-crossed braided extension $\cF$ of $\cC$ equips $\Irr(\cC)$ 
with a $G$-action.
This $G$-space already contains all the information about the rank of 
possible $G$-crossed braided extensions associated with $(G,\rho)$.
Namely, the following well-know formula, \cf \cite[Eq.\ (348)]{BaBoChWa2014}
holds.
\begin{prop}
  \label{prop:main}
  Let $\cF=\bigoplus_{g\in G} \cF_g$ be a (faithful) $G$-crossed 
  braided extension of a modular tensor category $\cC\cong \cF_e$.
  Then the rank  of $\cF_g$ is equal to the size  of the stabilizer $\Irr(\cC)^g$, \ie
  \begin{align}
    \rk{\cF_g}&=|\Irr(\cC)^g|\,.
  \intertext{In particular, the
  rank of $\cF$ is 
determined via the $G$-action on $\Irr(\cC)$ by}
    \rk{\cF}&=\sum_{g\in G}|\Irr(\cC)^g|=|G| |\Irr(\cC)/G|\,.
  \end{align}
\end{prop}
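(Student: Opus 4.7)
My plan is to extract the rank of $\cF_g$ by computing a categorical trace in two ways, exploiting modularity of $\cC$ to diagonalize the complexified Grothendieck ring.

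Set $R := K_0(\cC) \otimes_\ZZ \CC$ and $M := K_0(\cF_g) \otimes_\ZZ \CC$. Modularity of $\cC$ makes $R$ a commutative semisimple $\CC$-algebra, with primitive idempotents $\{e_i\}_{i \in \Irr(\cC)}$ furnished by the Verlinde formula (the columns of the $S$-matrix); the induced $G$-action on $R$ is $g(e_i) = e_{g(i)}$. The $G$-crossed braiding $c_{X,Y}\colon X\otimes Y\to\prescript{g}{}{Y}\otimes X$ descends to the relation $[X] \cdot [Y] = [\prescript{g}{}{Y}] \cdot [X]$ in $M$, so $M$ becomes an $R$-bimodule in which right multiplication by $e_i$ coincides with left multiplication by $e_{g(i)}$. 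Simultaneous left/right eigenvectors of $R$ on $M$ therefore come in eigenvalue pairs $(\chi_i, \chi_{g^{-1}(i)})$, and I write $M = \bigoplus_i M_i$ for the decomposition into left-$\chi_i$ isotypic components. Since $\rk{\cF_g} = \sum_i \dim M_i$, it suffices to prove $\dim M_i = \delta_{g(i), i}$.

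The main calculation is the trace $\Tr_M([Y])$, evaluated two ways. Algebraically, $\Tr_M([Y]) = \sum_i \chi_i([Y]) \dim M_i$. Categorically, for each simple $X \in \Irr(\cF_g)$ the crossed braiding $c_{Y,X}\colon Y\otimes X\to X\otimes Y$ yields $\dim \Hom_\cF(X, Y\otimes X) = \dim \Hom_\cC(\tu, X^*\otimes X\otimes Y) = \dim \Hom_\cC(Y^*, X^*\otimes X)$, so $\Tr_M([Y]) = \dim \Hom_\cC(Y^*, \Theta_g)$ where $\Theta_g := \bigoplus_{X \in \Irr(\cF_g)} X^* \otimes X \in \cC$. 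The key claim---that $[\Theta_g]\in R$ matches, via the Verlinde change of basis, the formal sum of primitive idempotents indexed by the $g$-fixed simples of $\cC$---then identifies the two expressions and, by linear independence of characters, forces $\dim M_i = \delta_{g(i), i}$. Hence $\rk{\cF_g} = |\Irr(\cC)^g|$, and the total rank formula follows by summation and Burnside's lemma, $\sum_g |\Irr(\cC)^g| = |G| \cdot |\Irr(\cC)/G|$.

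The hard step is identifying $[\Theta_g]$ with the prescribed sum of idempotents. This is where the full modular structure of $\cC$ is essential---one needs the non-degeneracy of the $S$-matrix to invert the Verlinde basis change between simples and idempotents and to pick out the $g$-fixed columns---together with the detailed compatibility of the $G$-action with the braiding encoded in the $G$-crossed hexagons. I would expect this identification to follow from a computation of the coefficient of $[X_k]$ in $[\Theta_g]$ as a character sum $\sum_{i\in\Irr(\cC)^g}\overline{S_{ik}}/S_{0i}$, with the restriction to $g$-fixed simples arising from the $G$-crossed braiding relation $[X][Y] = [\prescript{g}{}{Y}][X]$ combined with $S$-matrix orthogonality.
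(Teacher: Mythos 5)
Your reduction is set up correctly as far as it goes: $M=K_0(\cF_g)\otimes_\ZZ\CC$ is an $R$-bimodule in which right multiplication by $e_i$ agrees with left multiplication by $e_{g(i)}$, the trace of left multiplication by $[Y]$ equals both $\sum_i\chi_i([Y])\dim M_i$ and $\dim\Hom_\cC(Y^*,\Theta_g)$, and non-degeneracy of the $S$-matrix would convert knowledge of $[\Theta_g]$ into the numbers $\dim M_i$. But the argument stops exactly where the content begins. Your ``key claim'' --- that $[\Theta_g]$ is the sum of the primitive idempotents over $\Irr(\cC)^g$ --- is not an auxiliary step that can be deferred: at $Y=\tu$ the trace identity degenerates to $\rk{\cF_g}=\dim\Hom(\tu,\Theta_g)=\rk{\cF_g}$ and carries no information, while for general $Y$ the claim is strictly stronger than the proposition, since it determines every multiplicity $\dim\Hom_\cC(Y,\Theta_g)$ rather than just the one at $Y=\tu$. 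Nothing in the proposal produces these multiplicities; the closing sentence (``I would expect this identification to follow from \dots\ $S$-matrix orthogonality \dots\ combined with the $G$-crossed braiding relation'') restates the goal rather than deriving it. The bimodule relation $[X][Y]=[\prescript{g}{}{Y}][X]$ by itself only yields $M_i=e_iM=Me_{g^{-1}(i)}$; it places no bound on $\dim M_i$ and cannot detect when $M_i$ vanishes. Making the sketch precise amounts to establishing a $G$-crossed Verlinde formula, which is a genuine theorem rather than a routine orthogonality computation, and it is not clear that the route you indicate avoids circularity.

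It is worth observing that your $\Theta_g$ plays exactly the role of the object $F(Z(A))\cong\bigoplus_{M\in\Irr(\cC_A)}M\otimes_A\bar M$ in Proposition \ref{prop:ModuleRank}, and your key claim is precisely the conjunction of the two external inputs the paper's proof relies on: the computation of the full center $Z(A)\cong\bigoplus_{X,Y}Z_{X,Y}\,X\boxtimes\bar Y$ from \cite{FrFuRuSc2006,KoRu2008}, and the identification of $\cF_g$ with the $\cC$-module category $\cC_{\rho(g)}$ whose Lagrangian algebra is $(\id\boxtimes\rho(g))I(\tu)$, so that $Z_{X,Y}=\delta_{\rho(g)(X),Y}$, from \cite[Sec.\ 5.4]{EtNiOs2010}. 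You have reconstructed the shape of that argument while omitting the two results that carry its weight; to close the gap, either import them as the paper does or supply an independent proof of the decomposition of $\Theta_g$.
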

The goal of this note is to provide a short proof of this statement using modular 
invariants, which were used in the operator algebra literature,
\eg \cite{BcEvKa1999,BcEvKa2000}.

\section{The rank of module categories and $G$-crossed braided extensions}
Let $\cC$ be a modular tensor category and $A$ a simple non-degenerate algebra 
object in $\cC$. 
Here non-degenerate means that the trace pairing 
$\Phi_A\colon A \to \overline{A}$ given 
by \[\Phi_A=((\mathrm{ev}_{ A})\circ(\id_{\bar A}\otimes m) \otimes \id_{\bar A})
\circ(\id_{\bar A} \otimes (\id_{A}\otimes\mathrm{coev}_A)\circ m)\circ(\mathrm{coev}_{\bar A} \otimes \id_A)\]
is invertible,
which implies that $A$ has the structure of a special symmetric 
Frobenius algebra object, see \cite[Lemma 2.3]{KoRu2008}.

We denote the category with the opposite braiding by $\overline{\cC}$.
The full center $Z(A)$ is a Lagrangian algebra object in $Z(\cC)\cong\cC\boxtimes\overline{\cC}$ \cite{FrFuRuSc2006,KoRu2008,DaMgNiOs2013}. 
We note that the forgetful functor $F\colon \cC\boxtimes \overline{\cC}\to \cC$ 
maps $X\boxtimes Y\mapsto X\otimes Y$ and has an adjoint 
$I\colon \cC \to \cC\boxtimes \overline{\cC}$. 
The image $I(\tu)$ has the canonical structure of a Lagrangian algebra
in $\cC\boxtimes \overline{\cC}$. If $X\in \cC$ we denote by $\bar X\in \cC$ 
a dual object. 
The full center $Z(A)$  can be realized  as the left center  of the braided product of $(A\boxtimes\tu) \otimes^+ I(\tu)$ \cite{FrFuRuSc2006,KoRu2008}. 
Given a simple non-degenerate algebra $A$ there are two functors 
$\alpha^\pm$ from $\cC$ to the category $\bim A\cC A$ of $A$-bimodules in $\cC$.
They are given by equipping the image of the free right module functor $-\otimes A$
with a left action using the braiding or opposite braiding,
respectively, see
\cite{FrFuRuSc2006}.

The following proposition is well-known to experts.
\begin{prop} 
  \label{prop:ModuleRank}
  Let $\cC$ be a modular tensor category, 
  $A$ a simple non-degenerate algebra object in $\cC$,
  and $Z$ the associated modular invariant matrix, \ie 
  $Z=(Z_{X,Y})_{X,Y\in \Irr(\cC)}$ is the square matrix given by
  \begin{align}
    Z_{X,Y}&=\dim_\CC \Hom_{\!\!\bim A\cC A}(\alpha^+(X),\alpha^-(\bar Y))\,.
  \end{align}
  Then
  \begin{enumerate}
    \item
      The full center $Z(A)$ of $A$   as an object in 
      $\cC\boxtimes \overline{\cC}$ is given by
      \begin{align}
        Z(A)&\cong \bigoplus_{X,Y\in \Irr(\cC)} Z_{X,Y} \cdot X\boxtimes \bar Y\,.
      \end{align}
     \item The rank $\rk{\cC_A}$ of the category of right $A$-modules
      $\cC_A$ is given by the trace of the matrix $Z$: 
      \begin{align}
        \rk{\cC_A} &= \tr(Z)=\sum_{X\in \Irr(\cC)} Z_{X,X}\,.
      \end{align}
    \end{enumerate}
\end{prop}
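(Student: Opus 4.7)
The plan is to combine Frobenius reciprocity for the adjunction $F\dashv I$ with the realization of $Z(A)$ as a left center. Both parts of Proposition~\ref{prop:ModuleRank} fall out of the same computation once this is in place.

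For part~(1), I would start from the description of $Z(A)$ as the left center of the braided product $(A\boxtimes\tu)\otimes^+ I(\tu)$ in $\cC\boxtimes\overline{\cC}$. Expanding $I(\tu)=\bigoplus_W W\boxtimes\bar W$ gives the ambient object $\bigoplus_W (A\otimes W)\boxtimes\bar W$. Applying $\Hom_{\cC\boxtimes\overline{\cC}}(X\boxtimes\bar Y,\slot)$ and then cutting down to the left-central part reduces, via the standard Böckenhauer--Evans--Kawahigashi and Fuchs--Runkel--Schweigert analysis, to requiring that the underlying morphism in $\cC_A$ intertwine both the $\alpha^+$ and $\alpha^-$ left $A$-actions — precisely the condition defining $\Hom_{\bim A\cC A}(\alpha^+(X),\alpha^-(\bar Y))$. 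This identifies the multiplicity of $X\boxtimes\bar Y$ in $Z(A)$ with $Z_{X,Y}$.

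For part~(2), the key move is Frobenius reciprocity: since the forgetful functor $F$ is a two-sided adjoint of $I$ in the spherical setting, one has
\begin{align}
\dim\Hom_{\cC\boxtimes\overline{\cC}}(I(\tu),Z(A))
= \dim\Hom_\cC(\tu,F(Z(A))).
\end{align}
By part~(1), $F(Z(A))=\bigoplus_{X,Y} Z_{X,Y}\, X\otimes\bar Y$, and $\dim\Hom_\cC(\tu,X\otimes\bar Y)=\delta_{X,Y}$ for simples, so the right-hand side equals $\sum_X Z_{X,X}=\tr Z$. To finish, I would identify the left-hand side with $\rk{\cC_A}$ using the correspondence between Lagrangian algebras in $\cC\boxtimes\overline{\cC}$ and indecomposable $\cC$-module categories: under $Z(A)\leftrightarrow\cC_A$, the object $F(Z(A))$ decomposes as the direct sum over $M\in\Irr(\cC_A)$ of the internal endomorphism algebras $\underline{\End}_\cC(M)$, each contributing $\dim\End_{\cC_A}(M)=1$ to the Hom from $\tu$.

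The main obstacle is justifying the structural identity $\rk{\cC_A}=\dim\Hom_{\cC\boxtimes\overline{\cC}}(I(\tu),Z(A))$, which rests on the full Lagrangian-algebra / module-category dictionary. If one preferred a more self-contained route, an alternative would be to count simple summands of the projective generator $\bigoplus_X X\otimes A$ of $\cC_A$ directly through an $\alpha^\pm$-induction computation; this avoids the dictionary but trades brevity for elementarity.
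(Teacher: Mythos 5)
Your proposal is correct and follows essentially the same route as the paper: part~(1) is the Fröhlich--Fuchs--Runkel--Schweigert/Böckenhauer--Evans--Kawahigashi left-center computation, and part~(2) amounts to evaluating $\dim\Hom_\cC(\tu,F(Z(A)))$ both as $\tr(Z)$ and as $|\Irr(\cC_A)|$ via the decomposition $F(Z(A))\cong\bigoplus_{M\in\Irr(\cC_A)}M\otimes_A\bar M$ into connected internal ends, exactly as in \cite{KoRu2008}. The Frobenius-reciprocity detour through $\Hom(I(\tu),Z(A))$ is harmless but unnecessary, since your final identification already works directly with $F(Z(A))$.
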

\begin{proof}
  The first statement is \cite[Remark 3.7]{FrFuRuSc2006}.
  The second statement is 
  \cite[Corollary 6.1]{BcEvKa1999} in the case that $A$ is a Q-system 
  and $\cC$ is a unitary modular tensor category realized as endomorphisms of 
  a type III factor. 
  In the more general setting, it 
  follows  from  \cite[Eq.\ (4.4) and Prop.\ 4.3]{KoRu2008}.
  Namely, $F(Z(A))\cong \bigoplus_{X,Y\in\Irr(\cC)} Z_{X,Y}X\otimes \bar Y$
  thus $\dim\Hom(\tu,F(Z(A))=\tr(Z)$ equals the number of isomorphism classes
  of simple modules in $\cC_A$, since $F(Z(A))$ 
  is equivalent to $\bigoplus_{M\in\Irr(\cC_A)} M\otimes_A \bar M$
  and since $M\otimes_A\bar M$ is a connected algebra by \cite[Proof of Prop.\ 4.10]{KoRu2008}.
\end{proof}
For $\phi\in \Aut_{\otimes}^{\mathrm{br}}(\cC)$
there is a canonical Lagrangian algebra 
$L_\phi=
(\id \boxtimes \phi)I(1)$  in $\cC\boxtimes \overline{\cC}$,
see \eg \cite[Sec 3.2-3.3]{DaNiOs2013}.
The algebra $L_\phi$ corresponds to the full center of a $\cC$-module category,
which we denote by $\cC_\phi$, \cf \cite{KoRu2008,DaMgNiOs2013}.
This module category can be obtained, using \eg \cite{KoRu2008},  by taking any connected summand $A$ of the algebra $F(L_\phi)$ 
(which  always fulfills $Z(A)=L_\phi$) and 
 considering the $\cC$-module category
$\cC_\phi:=\cC_A$, see \cite{KoRu2008},
and also \cite[Lemma 5.1]{EtNiOs2010}.
Since $L_\phi\cong \bigoplus_{X\in\Irr(\cC)} X\boxtimes\phi(\bar X)$, \ie
$Z_{X,Y}=\delta_{\phi(X),Y}$, Proposition 
  \ref{prop:ModuleRank}
immediately gives the following statement.
\begin{lem}
  \label{lem}
 The rank of $\cC_\phi$ equals $|\Irr(\cC)^\phi|$, \ie the number
  of isomorphism classes of simple objects in $\cC$ which are fixed by $\phi$.
\end{lem}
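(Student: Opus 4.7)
The plan is to apply Proposition~\ref{prop:ModuleRank} directly to any simple non-degenerate algebra $A$ whose full center is $L_\phi$, using that $L_\phi$ is already presented to us in a form from which its simple decomposition is essentially immediate.

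First I would compute the decomposition of $L_\phi$ into simple objects of $\cC\boxtimes\overline{\cC}$. The simples there are of the form $Y\boxtimes \bar Z$ with $Y,Z\in\Irr(\cC)$, and Frobenius reciprocity for the adjunction $F\dashv I$ gives
\begin{align}
  \dim\Hom(Y\boxtimes \bar Z,\, I(\tu)) = \dim\Hom(Y\otimes \bar Z,\tu) = \delta_{Y,Z},
\end{align}
so $I(\tu)\cong \bigoplus_{X\in\Irr(\cC)} X\boxtimes \bar X$. Since $\phi$ is a braided autoequivalence it commutes with duality, hence
\begin{align}
  L_\phi = (\id\boxtimes \phi)\, I(\tu) \;\cong\; \bigoplus_{X\in\Irr(\cC)} X\boxtimes \overline{\phi(X)}.
\end{align}

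Next I would read off the modular invariant matrix. By Proposition~\ref{prop:ModuleRank}(1), any such $A$ satisfies $Z(A)\cong \bigoplus_{X,Y} Z_{X,Y}\, X\boxtimes \bar Y$; comparing with the decomposition of $L_\phi$ just obtained, and using uniqueness of the isotypic decomposition in the semisimple category $\cC\boxtimes\overline{\cC}$, one recovers $Z_{X,Y}=\delta_{\phi(X),Y}$, as already asserted in the paragraph preceding the lemma. Finally, Proposition~\ref{prop:ModuleRank}(2) yields
\begin{align}
  \rk{\cC_\phi} = \tr(Z) = \sum_{X\in\Irr(\cC)} \delta_{\phi(X),X} = |\Irr(\cC)^\phi|,
\end{align}
which is the claim.

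The only step with any content is the identification $I(\tu)\cong \bigoplus_X X\boxtimes \bar X$, and even this is a standard Frobenius-reciprocity calculation; the rest is bookkeeping on top of Proposition~\ref{prop:ModuleRank}. So I do not expect any real obstacle — the lemma is essentially an immediate corollary of the explicit form of $L_\phi$ together with the two parts of the preceding proposition.
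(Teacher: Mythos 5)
Your argument is correct and is essentially the paper's own: the lemma is deduced there in exactly the same way, from $L_\phi\cong\bigoplus_{X\in\Irr(\cC)}X\boxtimes\phi(\bar X)$, hence $Z_{X,Y}=\delta_{\phi(X),Y}$, combined with part (2) of Proposition~\ref{prop:ModuleRank}. The only difference is that you spell out the Frobenius-reciprocity computation showing $I(\tu)\cong\bigoplus_{X\in\Irr(\cC)}X\boxtimes\bar X$, which the paper states without proof.
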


\begin{proof}[Proof of Proposition \ref{prop:main}]
  Let $\cF=\bigoplus_{g\in G} \cF_g$ be a $G$-crossed braided extension
  of a modular tensor category $\cC$. 
  By \cite[Sec.\ 5.4]{EtNiOs2010} the category $\cF_g$ is equivalent as 
  a left $\cC$-module 
  category to the category $\cC_g$ in Lemma \ref{lem}  
  and the statement follows.
\end{proof}

\subsection{The rank of permutation extensions}
  Let $\cC$ be a modular tensor category.
  Gannon and Jones announced \cite{GaJo2018}
  that certain cohomological obstructions
  vanish and thus that there is always an 
  $S_n$-crossed braided extension of $\cC^{\boxtimes n}$
  which we denote by $\cC\wr S_n$, where the action of $S_n$ is given 
  by permuations of objects.
  More general, for any subgroup $G\leq S_n$
  there is a $G$-crossed braided extension $\cC\wr G$.

  For the cyclic subgroup $\ZZ_n\cong \langle (12\cdots n)\rangle \subset S_n$
  with $n$ prime we obtain
  \begin{align}
    \rk{\cC\wr \ZZ_n}=\rk{\cC}^n+(n-1)\rk{\cC}\,,
\end{align}
which in the case $n=2$  
is also true by construction for the examples considered in \cite{EdJoPl2018}.

In general, for the full symmetric group, we get that $\rk{(\cC\wr S_n)_g}$ 
depends only on the conjugacy class $C_g$ of $g\in S_n$.
Namely, $\rk{\cC\wr S_n)_g}=\rk{\cC)}^{|a|}$  where $a=(a_1,\ldots, a_n)$ and 
  $a_j$ is the number 
  of $j$-cycles (counting 1-cycles) in the cycle decomposition of $g$ and $|a|=\sum_ja_j$.
  Thus summing over all elements of the $p(n)$ conjugacy classes we get
  \begin{align}
    \rk{\cC\wr S_n}&=
    \sum_{a=(a_1,\ldots,a_n)}
    c_a \rk{\cC}^{|a|}
    \,,&
    c_a=\frac{n!}{\prod_j(j)^{a_j}(a_j!)}\,,
  \end{align}
  where $c_a=|C_g|$ the size of the conjugacy class $C_g$ of an element $g\in S_n$ with 
  cycle type $a=(a_1,\ldots,a_n)$ and the sum runs over all unordered 
  partitions of $\{1,\ldots,n\}$ with $a_j$ the number of partitions of length $j$. 
  Explicitly,
  \begin{align}
    \rk{\cC\wr S_3}&=\rk{\cC}^3+3\rk{\cC}^2+2\rk{\cC}\\
    \rk{\cC\wr S_4}&=\rk{\cC}^4+6\rk{\cC}^3+11\rk{\cC}^2+6\rk{\cC}\\
    \rk{\cC\wr S_5}&=
    \rk{\cC}^5+10\rk{\cC}^4+35\rk{\cC}^3+50\rk{\cC}^2+24\rk{\cC}\\\cdots
  \end{align}

\subsection*{Acknowledgments} 
I would like to thank Corey Jones, David Penneys, and Julia Plavnik for discussions.

\def\cprime{$'$}\newcommand{\noopsort}[1]{}
\begin{bibdiv}
\begin{biblist}

\bib{BaBoChWa2014}{article}{
      author={Barkeshli, Maissam},
      author={Bonderson, Parsa},
      author={Cheng, Meng},
      author={Wang, Zhenghan},
       title={Symmetry, defects, and gauging of topological phases},
        date={2014},
     journal={arXiv preprint arXiv:1410.4540},
}

\bib{BcEvKa2000}{article}{
      author={Böckenhauer, Jens},
      author={Evans, David~E.},
      author={Kawahigashi, Yasuyuki},
       title={{Chiral structure of modular invariants for subfactors}},
        date={2000},
        ISSN={0010-3616},
     journal={Comm. Math. Phys.},
      volume={210},
      number={3},
       pages={733–784},
         url={http://dx.doi.org/10.1007/s002200050798},
      review={\MR{1777347 (2001k:46097)}},
}

\bib{BcEvKa1999}{article}{
      author={Böckenhauer, Jens},
      author={Evans, David~E.},
      author={Kawahigashi, Yasuyuki},
       title={{On {$\alpha$}-induction, chiral generators and modular
  invariants for subfactors}},
        date={1999},
        ISSN={0010-3616},
     journal={Comm. Math. Phys.},
      volume={208},
      number={2},
       pages={429–487},
         url={http://dx.doi.org/10.1007/s002200050765},
      review={\MR{1729094 (2001c:81180)}},
}

\bib{Bi2018}{article}{
      author={Bischoff, Marcel},
       title={Conformal net realizability of {T}ambara-{Y}amagami categories
  and generalized metaplectic modular categories},
        date={2018},
     journal={arXiv preprint arXiv:1803.04949},
}

\bib{BaKi2001}{book}{
      author={Bakalov, Bojko},
      author={Kirillov, Alexander, Jr.},
       title={Lectures on tensor categories and modular functors},
      series={University Lecture Series},
   publisher={American Mathematical Society, Providence, RI},
        date={2001},
      volume={21},
        ISBN={0-8218-2686-7},
      review={\MR{1797619}},
}

\bib{CuGaPlZh2016}{article}{
      author={Cui, Shawn~X.},
      author={Galindo, C\'esar},
      author={Plavnik, Julia~Yael},
      author={Wang, Zhenghan},
       title={On gauging symmetry of modular categories},
        date={2016},
        ISSN={0010-3616},
     journal={Comm. Math. Phys.},
      volume={348},
      number={3},
       pages={1043\ndash 1064},
         url={https://doi.org/10.1007/s00220-016-2633-8},
      review={\MR{3555361}},
}

\bib{DrGeNiOs2010}{article}{
      author={Drinfeld, Vladimir},
      author={Gelaki, Shlomo},
      author={Nikshych, Dmitri},
      author={Ostrik, Victor},
       title={On braided fusion categories. {I}},
        date={2010},
        ISSN={1022-1824},
     journal={Selecta Math. (N.S.)},
      volume={16},
      number={1},
       pages={1\ndash 119},
         url={http://dx.doi.org/10.1007/s00029-010-0017-z},
      review={\MR{2609644 (2011e:18015)}},
}

\bib{DaMgNiOs2013}{article}{
      author={Davydov, Alexei},
      author={Müger, Michael},
      author={Nikshych, Dmitri},
      author={Ostrik, Victor},
       title={{The {W}itt group of non-degenerate braided fusion categories}},
        date={2013},
        ISSN={0075-4102},
     journal={J. Reine Angew. Math.},
      volume={677},
       pages={135–177},
      review={\MR{3039775}},
}

\bib{DaNiOs2013}{article}{
      author={Davydov, Alexei},
      author={Nikshych, Dmitri},
      author={Ostrik, Victor},
       title={{On the structure of the {W}itt group of braided fusion
  categories}},
        date={2013},
        ISSN={1022-1824},
     journal={Selecta Math. (N.S.)},
      volume={19},
      number={1},
       pages={237–269},
         url={http://dx.doi.org/10.1007/s00029-012-0093-3},
      review={\MR{3022755}},
}

\bib{EdJoPl2018}{article}{
      author={Edie-Michell, Cain},
      author={Jones, Corey},
      author={Plavnik, Julia},
       title={Fusion rules for {$\mathbb{Z}/ 2\mathbb{Z}$} permutation
  gauging},
        date={2018},
     journal={arXiv preprint arXiv:1804.01657},
}

\bib{EtNiOs2010}{article}{
      author={Etingof, Pavel},
      author={Nikshych, Dmitri},
      author={Ostrik, Victor},
       title={Fusion categories and homotopy theory},
        date={2010},
        ISSN={1663-487X},
     journal={Quantum Topol.},
      volume={1},
      number={3},
       pages={209\ndash 273},
         url={http://dx.doi.org/10.4171/QT/6},
        note={With an appendix by Ehud Meir},
      review={\MR{2677836}},
}

\bib{FrFuRuSc2006}{article}{
      author={Fröhlich, Jürg},
      author={Fuchs, Jürgen},
      author={Runkel, Ingo},
      author={Schweigert, Christoph},
       title={{Correspondences of ribbon categories}},
        date={2006},
        ISSN={0001-8708},
     journal={Adv. Math.},
      volume={199},
      number={1},
       pages={192–329},
         url={http://dx.doi.org/10.1016/j.aim.2005.04.007},
      review={\MR{2187404 (2007b:18007)}},
}

\bib{GaJo2018}{article}{
      author={Gannon, Terry},
      author={Jones, Corey},
       title={Vanishing of categorical obstructions for permutation orbifolds},
        date={2018},
     journal={arXiv preprint arXiv:1804.08343},
}

\bib{KaLoMg2001}{article}{
      author={Kawahigashi, Y.},
      author={Longo, Roberto},
      author={Müger, Michael},
       title={{Multi-Interval Subfactors and Modularity of Representations in
  Conformal Field Theory}},
        date={2001},
     journal={Comm. Math. Phys.},
      volume={219},
       pages={631–669},
      eprint={arXiv:math/9903104},
}

\bib{KoRu2008}{article}{
      author={Kong, Liang},
      author={Runkel, Ingo},
       title={{Morita classes of algebras in modular tensor categories}},
        date={2008},
        ISSN={0001-8708},
     journal={Adv. Math.},
      volume={219},
      number={5},
       pages={1548–1576},
         url={http://dx.doi.org/10.1016/j.aim.2008.07.004},
      review={\MR{2458146 (2009h:18016)}},
}

\bib{Mg2005}{article}{
      author={Müger, Michael},
       title={{Conformal Orbifold Theories and Braided Crossed G-Categories}},
        date={2005},
        ISSN={0010-3616},
     journal={Comm. Math. Phys.},
      volume={260},
       pages={727–762},
         url={http://dx.doi.org/10.1007/s00220-005-1291-z},
}

\bib{Tu2010}{book}{
      author={Turaev, Vladimir},
       title={Homotopy quantum field theory},
      series={EMS Tracts in Mathematics},
   publisher={European Mathematical Society (EMS), Z\"urich},
        date={2010},
      volume={10},
        ISBN={978-3-03719-086-9},
         url={http://dx.doi.org/10.4171/086},
        note={Appendix 5 by Michael M\"uger and Appendices 6 and 7 by Alexis
  Virelizier},
      review={\MR{2674592}},
}

\end{biblist}
\end{bibdiv}
\address
\end{document}